\newcommand{\la}{\lambda}
\newcommand{\La}{\Lambda}
\newcommand{\we}{\wedge}
\DeclareMathOperator{\Hom}{Hom}
\DeclareMathOperator{\Sp}{\mathrm{Sp}}
\newtheorem{theorem}{Theorem}[section]
\newtheorem{lemma}[theorem]{Lemma}
\newtheorem{proposition}[theorem]{Proposition}
\newtheorem{cor}[theorem]{Corollary}
\theoremstyle{definition}
\newtheorem{example}[theorem]{Example}
\theoremstyle{remark}
\newtheorem{remark}[theorem]{Remark}
\newtheorem{remarks}[theorem]{Remarks}
\numberwithin{equation}{section}
\begin{document}

\title[homomorphisms into two part Weyl modules]{On homomorphisms into Weyl modules corresponding to partitions with two parts}


\author{Mihalis Maliakas}
\address{Department of Mathematics, University of Athens}

\email{mmaliak@math.uoa.gr}

\author{Dimitra-Dionysia Stergiopoulou}
\address{Department of Mathematics, University of Athens}

\email{dstergiop@math.uoa.gr}
\subjclass[2020]{Primary 20G05, Secondary 05E10}

\date{}

\begin{abstract}Let $K$ be an infinite field of characteristic $p>0$ and let $\lambda, \mu$ be partitions, where $\mu$ has two parts.  We find sufficient arithmetic conditions on $p, \lambda, \mu$ for the existence of a nonzero homomorphism $\Delta(\lambda) \to \Delta (\mu)$ of Weyl modules for the general linear group $GL_n(K)$. Also, for each $p$ we find sufficient conditions so that the corresponding homomorphism spaces have dimension at least 2.
\end{abstract}
\maketitle

\section{Introduction}
\label{intro}
In the representation theory of the general linear group $GL_n(K)$, where $K$ is an infinite field of characteristic $p>0$, the Weyl modules $\Delta(\la)$ are of central importance. These are parametrized by partitions $\la$ with at most $n$ parts. Over a field of characteristic zero, the modules $\Delta(\la)$ are irreducible. However over fields of positive characteristics this is no longer true and determining their structure is a major problem. In particular, very little is known about homomorphisms between them.

For $GL_3(K)$ all homomorphisms between Weyl modules have been classified when $p>2$ by Cox and Parker \cite{CoP}. Some of the few general results are the non vanishing theorems of Carter and Payne \cite{CP} and Koppinen \cite{Ko}, and the row or column removal theorems of Fayers and Lyle \cite{Ly1} and Kulkarni \cite{Ku}.

In \cite{MS} we examined homomorphisms into hook Weyl modules and obtained a classification result. This has been obtained also by Loubert \cite{Lou}. In the present paper we consider homomorphisms $\Delta(\la) \to \Delta(\mu)$, where $\mu$ has two parts. The main result, Theorem 3.1, provides sufficient arithmetic conditions on $\la, \mu$ and $p$ so that $\Hom_S(\Delta(\la), \Delta(\mu)) \neq 0$, where $S$ is the Schur algebra for $GL_n(K)$ of appropriate degree. An explicit map is provided that corresponds to the  sum of all standard tableaux of shape $\mu$ and weight $\la$. The main tool of the proof is the description of Weyl modules by generators and relations of Akin, Buchsbaum and Weyman \cite{ABW}.

The first examples of pairs of Weyl modules with homomorphism spaces of dimension greater than 1 were obtained by Dodge \cite{Do}. Shortly after, more were found by Lyle \cite{Ly1}. In Corollary 6.2, we find sufficient conditions on $\la, \mu$ and $p$ so that $\dim \Hom_S(\Delta(\la), \Delta(\mu))>1$ and thus we have new examples of homomorphism spaces between Weyl modules of dimension greater than 1.

By a classical theorem of Carter and Lusztig \cite{CL}, the results in Theorem 3.1 and Corollary 6.2 have analogues for Specht modules for the symmetric group when $p>2$, see Remark 3.2 and Remark 6.3.

Section 2 is devoted to notation and preliminaries. In Section 3 we state the main result and in Section 4 we consider the straightening law  needed later. The proof of the main result is in Section 5. In Section 6 we consider homomorphism spaces of dimension greater than 1.

\section{Preliminaries}
\subsection{Notation}

Throughout this paper, $K$ will be an infinite field of
characteristic $p>0$. We will be working with homogeneous polynomial representations of $GL_n(K)$ of degree $r$, or equivalently, with modules over the Schur algebra $S=S_K(n,r)$. A standard reference here is  \cite{Gr}.

In what follows we fix notation and recall from Akin and Buchsbaum \cite{AB}, and also Akin, Buchsbaum and Weyman \cite{ABW} important facts.

Let $V=K^n$ be the natural $GL_n(K)$-module. The divided power algebra $DV=\sum_{i\geq 0}D_iV$ of $V$ is defined as the graded dual of the Hopf algebra $S(V^{*})$, where $V^{*}$ is the linear dual of $V$ and $S(V^{*})$ is the symmetric algebra of $V^{*}$, see \cite{ABW}, I.4. For $v \in V$ and $i, j$ nonnegative integers, we will use many times relations of the form \[v^{(i)}v^{(j)}=\tbinom{i+j}{j}v^{(i+j)},\] where $\tbinom{i+j}{j}$ is the indicated binomial coefficient.

By $\wedge(n,r)$ we denote the set of sequences $a=(a_1, \dots, a_n)$ of nonnegative integers that sum to $r$ and by $\wedge^+(n,r)$ we denote the subset of $\wedge(n,r)$ consisting of sequences $\lambda=(\lambda_1, \dots, \lambda_n)$ such that $\lambda_1 \ge \lambda_2 \dots \ge \lambda_n$. Elements of $\wedge^+(n,r)$ are referred to as partitions of $r$ with at most $n$ parts. The transpose partition $\la^t =(\la_1^t,...,\la_n^t) \in \we^+(\la_1,r)$ of a partition $\la=(\la_1,...,\la_n) \in \we^+(n,r)$ is defined by $\la_j^t = \# \{i: \la_i \ge j\}$.

If $a=(a_1,\dots, a_n) \in \wedge(n,r)$, we denote by $D(a)$ or $D(a_1,\dots,a_n)$ the tensor product $D_{a_1}V\otimes \dots \otimes D_{a_n}V$. All tensor products in this paper are over $K$.

The exterior algebra of $V$ is denoted $\Lambda V=\sum_{i\geq 0}\Lambda^iV$. If $a=(a_1,\dots, a_n) \in \wedge(n,r)$, we denote by $\Lambda(a)$  the tensor product $\Lambda^{a_1}V\otimes \dots \otimes \Lambda^{a_n}V$.

For $\lambda \in \wedge^+(n,r)$, we denote by $\Delta(\lambda)$ the corresponding Weyl module for $S$. In \cite{ABW}, Definition II.1.4, the module $\Delta(\la)$ (denoted $K_{\la}F$ there), was defined as the image a map particular $d'_{\la} : D(\la) \to \Lambda(\la^{t})$. For example, if $\lambda =(r)$, then $\Delta(\lambda) =D_rV$, and if $\lambda =(1^r)$, then $\Delta(\lambda) =\La^{r}V$.

\subsection{Relations for Weyl modules.} We recall from \cite{ABW}, Theorem II.3.16, the following description of $\Delta (\lambda)$ in terms of generators and relations. \begin{theorem}[\cite{ABW}] Let $\lambda=(\lambda_1,\dots,\lambda_m) \in \wedge^+(n,r)$, where $\lambda_m >0$. There is an exact sequence of $S$-modules \[
\sum_{i=1}^{m-1}\sum_{t=1}^{\lambda_{i+1}}D(\lambda_1,\dots,\lambda_i+t,\lambda_{i+1}-t,\dots,\lambda_m) \xrightarrow{\square_{\la}} D(\lambda) \xrightarrow{d'_\lambda} \Delta(\lambda) \to 0,
\]
where the restriction of $ \square_{\la} $ to the summand $M(t)=D(\lambda_1,\dots,\lambda_i+t,\lambda_{i+1}-t,\dots,\lambda_m)$ is the composition
\[
M(t) \xrightarrow{1\otimes\cdots \otimes \Delta \otimes \cdots 1}D(\lambda_1,\dots,\lambda_i,t,\lambda_{i+1}-t,\dots,\lambda_m)\xrightarrow{1\otimes\cdots \otimes \eta \otimes \cdots 1} D(\lambda),
\]
where $\Delta:D(\lambda_i+t) \to D(\lambda_i,t)$ and $\eta:D(t,\lambda_{i+1}-t) \to D(\lambda_{i+1})$ are the indicated components of the comultiplication and multiplication respectively of the Hopf algebra $DV$ and $d'_\lambda$ is the map in \cite{ABW}, Def.II.13.\end{theorem}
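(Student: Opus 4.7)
The plan is to produce the exact sequence in two stages: first check that $d'_\lambda \circ \square_\lambda = 0$, so that $\square_\lambda$ lands in the kernel of $d'_\lambda$; then show the reverse containment by a rank/spanning argument based on a straightening law.

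For the first stage I would fix a summand $M(t) = D(\lambda_1,\dots,\lambda_i+t,\lambda_{i+1}-t,\dots,\lambda_m)$ and work entirely in the two tensor factors indexed by rows $i, i+1$, since $\square_\lambda$ and the construction of $d'_\lambda$ are identity on the other factors. After applying the comultiplication $D_{\lambda_i+t}\to D_{\lambda_i}\otimes D_t$ followed by the multiplication $D_t\otimes D_{\lambda_{i+1}-t}\to D_{\lambda_{i+1}}$, the subsequent "place in columns and wedge" step of $d'_\lambda$ pushes the $t$ divided-power factors across from row $i+1$ into positions that, together with the original row-$i$ entries, overfill the $\lambda_i^t$ available slots in each affected column of $\lambda^t$. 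The vanishing then follows from antisymmetry in $\Lambda(\lambda^t)$, essentially a Laplace-expansion identity; this is the reason the range of $t$ is exactly $1 \le t \le \lambda_{i+1}$.

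For the second stage, set $\bar\Delta(\lambda) := D(\lambda)/\operatorname{Im}(\square_\lambda)$. The composite gives a surjection $\bar\Delta(\lambda) \twoheadrightarrow \Delta(\lambda)$, and it suffices to bound the rank of the source from above by the number of standard tableaux of shape $\lambda$. I would set up a straightening procedure: index the generators of $D(\lambda)$ by tableaux with entries from $\{1,\dots,n\}$, weakly increasing in each row; pick a total order (for example, the reverse-lexicographic order on column-reading words); and show that applying a relation from a well-chosen $M(t)$ at the first column violation rewrites a nonstandard generator as a $\mathbb Z$-linear combination of generators strictly smaller in that order. Iterating, $\bar\Delta(\lambda)$ is spanned by standard tableaux, whose count equals $\operatorname{rank}\Delta(\lambda)$; this forces the surjection to be an isomorphism and gives exactness.

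The main obstacle is the straightening step, since the relations from $\square_\lambda$ mix contributions from all values of $t$ with binomial coefficients arising from the divided-power comultiplication. The subtlety is choosing the $t$ and the pair of rows so that the "leading" term of the resulting identity is the offending generator with coefficient $\pm 1$ and all other terms are genuinely lower in the order. Once this is handled — typically by induction on the number of column violations, with an inner induction on the size of the violation — the rank comparison is routine, using that $\Delta(\lambda)$, as the image of the explicit map $d'_\lambda$, admits a standard basis (verified at the extremes $\lambda=(r)$ and $\lambda=(1^r)$ and extended in general from the shape of $d'_\lambda$).
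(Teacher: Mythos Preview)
The paper does not prove this statement: it is quoted from \cite{ABW}, Theorem II.3.16, as background (``We recall from \cite{ABW}\dots''), so there is no proof in the paper to compare your proposal against. Your two-stage outline---showing $d'_\lambda\circ\square_\lambda=0$ via an overfilling/antisymmetry argument in $\Lambda(\lambda^t)$, then bounding the quotient $D(\lambda)/\mathrm{Im}(\square_\lambda)$ by a straightening procedure that rewrites nonstandard generators as smaller ones---is indeed the shape of the original argument in \cite{ABW}, though you correctly flag that the delicate point is arranging the leading coefficient in the straightening step to be a unit despite the divided-power binomials.
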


\subsection{Standard basis of $\Delta(\mu)$} We will record here and in the next subsection two important facts from \cite{ABW} and \cite{AB} specified to the case of partitions consisting of two parts. 

Let us fix the order $e_1<e_2<...<e_n$ on the set $\{e_1, e_2, ..., e_n\}$ of the canonical basis elements of the natural module $V$ of $GL_n(K)$. We will denote each element $e_i$ by its subscript $i$. For a partition $\mu=(\mu_1,\mu_2) \in \we^+(n,r)$, a \textit{tableau} of shape $\mu$ is a filling of the diagram of $\mu$ with entries from $\{1,...,n\}$. Such a tableau is called \textit{standard} if the entries are weakly increasing across the rows from left to right and strictly increasing in the columns from top to bottom. (The terminology used in \cite{ABW} is 'co-standard'). 

The set of standard tableaux of shape $\mu$ will be denoted by $\mathrm{ST}(\mu)$. The \textit{weight} of a tableau $T$ is the tuple $\alpha=(\alpha_1,...,\alpha_n)$, where $\alpha_i$ is the number of appearances of the entry $i$ in $T$. The subset of $\mathrm{ST}(\mu)$ consisting of the (standard) tableaux of weight $\alpha$ will be denoted by $\mathrm{ST}_{\alpha}(\mu).$ 

For example, the following tableau of shape $\mu=(6,4)$. 
\begin{center}
$T=$
\begin{ytableau}
\ 1&1&1&2&2&4\\
\ 2&2&3&4
\end{ytableau}
\end{center}
is standard and has weight $\alpha=(3,4,1,2)$. 

We will use 'exponential' notation for standard tableaux. Thus for the above example we write \[ T=\begin{matrix*}[l]
1^{(3)}2^{(2)}4 \\
2^{(2)}34 \end{matrix*}. \]

To each tableau $T$ of shape $\mu=(\mu_1, \mu_2)$ we may associate an element \[x_T=x_T(1) \otimes x_T(2) \in D(\mu_1,\mu_2),\] where $x_T(i)=1^{(a_{i1})}\cdots n^{(a_{in})}$ and $a_{ij}$ is equal to the number of appearances of $j$ in the $i$-th row of $T$. For example, the $T$ depicted above yields $x_T=1^{(3)}2^{(2)}4\otimes2^{(2)}34$.
According to \cite{ABW}, Theorem II.2.16, we have the following. \begin{theorem}[\cite{ABW}]The set $\{d'_{\mu}(x_T): T \in \mathrm{ST}(\mu)\}$ is a basis of the $K$-vector space $\Delta({\mu})$.\end{theorem}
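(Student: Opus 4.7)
The plan is to combine the presentation of $\D(\mu)$ from Theorem 2.1 with the realisation (\cite{ABW}, II.1) of $\D(\mu)$ as the image of the explicit map $d'_\mu\colon D(\mu)\to\La(\mu^t)$. Spanning will follow from a straightening algorithm based on the single family of relations that arise from Theorem 2.1 when $\mu$ has two parts; linear independence will follow from identifying, for each $T\in\mathrm{ST}(\mu)$, a distinguished ``column-reading'' basis element of $\La(\mu^t)$ that appears in $d'_\mu(x_T)$ with nonzero coefficient and is different for different $T$.

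For spanning, the monomials $x_T$ with $T$ an arbitrary (not necessarily standard) tableau of shape $\mu$ form a $K$-basis of $D(\mu)$, so by Theorem 2.1 their images span $\D(\mu)$. Since $\mu$ has only two parts, Theorem 2.1 supplies a single family of relations parametrised by $t\in\{1,\ldots,\mu_2\}$. If $T$ is non-standard, there is a column $j$ with $T(1,j)\ge T(2,j)$; isolate the corresponding $p^{(a)}$ in $x_T(1)$ and $q^{(b)}$ in $x_T(2)$ with $q\le p$, and apply the relation of Theorem 2.1 with $i=1$ and $t=b$ to $(A\cdot p^{(a)}q^{(b)})\otimes B\in D(\mu_1+b,\mu_2-b)$. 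Expanding the comultiplication into its $D_{\mu_1}\otimes D_{b}$ components, one summand of $\square_\mu(\cdots)$ is $x_T$ itself, while every other summand comes from moving some $q$'s from row $2$ into row $1$ and some $p$'s from row $1$ into row $2$, producing tableaux with strictly more $q$'s (and strictly fewer $p$'s) in row $1$. Choosing a suitable total order on tableaux---for instance, lexicographic from the left on the content-tuple of row $2$---makes these summands strictly smaller than $T$, and noetherian induction yields the spanning.

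For linear independence, $\mu^t=(2^{\mu_2},1^{\mu_1-\mu_2})$ gives $\La(\mu^t)\cong(\La^2V)^{\otimes\mu_2}\otimes V^{\otimes(\mu_1-\mu_2)}$, with $K$-basis the monomials $\bigotimes_{j=1}^{\mu_2}(e_{c_j}\we e_{d_j})\otimes\bigotimes_{j=\mu_2+1}^{\mu_1}e_{c_j}$ with $c_j<d_j$. The map $d'_\mu$ is the composite of the iterated comultiplications $D_{\mu_i}V\to V^{\otimes\mu_i}$ on each row, followed by column-wise multiplication $V\otimes V\to\La^2V$ in columns $1,\ldots,\mu_2$. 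For a standard tableau $T$ consider $B_T=\bigotimes_{j\le\mu_2}(e_{T(1,j)}\we e_{T(2,j)})\otimes\bigotimes_{j>\mu_2}e_{T(1,j)}$; its coefficient in $d'_\mu(x_T)$ counts (with signs) the ways to rearrange the entries of $T$ so that the $j$-th columns produce the unordered pair $\{T(1,j),T(2,j)\}$. An induction on the smallest entry in a putative nontrivial set of column-flips shows, using the strict column-increase of $T$, that the only such rearrangement is the identity; hence the coefficient of $B_T$ equals $+1$ in every characteristic, and distinct standard tableaux produce distinct $B_T$. Therefore $\{d'_\mu(x_T):T\in\mathrm{ST}(\mu)\}$ is linearly independent.

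The main obstacle is the spanning step: one must pick the total order on tableaux so that every application of Theorem 2.1 strictly decreases it, and the verification must accommodate the binomial coefficients produced by $\D$, some of which can vanish in characteristic $p$ and thereby remove certain terms from the relation. Once the correct order is fixed, the straightening bookkeeping is routine and the two steps above combine to give the theorem.
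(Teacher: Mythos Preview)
The paper does not supply its own proof of this statement; it is simply quoted from \cite{ABW}, Theorem II.2.16, as background. So there is no ``paper's proof'' to compare with, and your outline is essentially the classical argument from \cite{ABW}: spanning by straightening, independence by a leading-term argument in $\La(\mu^t)$.

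Your spanning sketch is the right idea, and you correctly flag that choosing the termination order is where the work lies. (One small imprecision: when you diagonalise $A\,p^{(a)}q^{(b)}$, pieces of $A$ can also move to the degree-$b$ factor, so the ``other summands'' are not produced merely by swapping $p$'s and $q$'s. The usual cure is to straighten at the \emph{leftmost} violation and order tableaux by the row-2 word read left to right; then every non-identity summand has a strictly smaller entry appearing earlier in row~2.)

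There is, however, a genuine gap in your linear-independence step. Knowing that $B_T$ occurs in $d'_\mu(x_T)$ with coefficient $+1$, and that distinct standard $T$ give distinct $B_T$, does \emph{not} by itself yield independence: one must also rule out cancellation coming from $B_T$ appearing in $d'_\mu(x_{T'})$ for other standard $T'$. This really happens. For $\mu=(3,3)$ and weight $(1,2,2,1)$, take
\[
T_1=\begin{matrix}1&2&3\\2&3&4\end{matrix},\qquad
T_2=\begin{matrix}1&2&2\\3&3&4\end{matrix}.
\]
Then $B_{T_2}=(1\we 3)\otimes(2\we 3)\otimes(2\we 4)$ occurs in $d'_\mu(x_{T_1})$ with coefficient $-1$. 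What saves the day is \emph{triangularity}: with a suitable total order on standard tableaux (for instance, lexicographic on the row-1 word), $B_{T'}$ can appear in $d'_\mu(x_T)$ only when $T'\le T$, and the diagonal coefficients are $+1$. Your column-flip argument proves exactly the diagonal entry; you still need the off-diagonal vanishing (equivalently: for standard $T'\ne T$, the multiset of row~1 of $T'$ cannot be realised as a selection of one entry from each column of $T$ unless $T'\le T$). Once that is added, the argument is complete; alternatively, one can bypass the direct independence argument by combining your spanning with a rank count over $\mathbb{Z}$, which is the route taken in \cite{ABW}.
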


If $x=1^{(a_1)}2^{(a_2)}\cdots n^{(a_n)} \otimes 1^{(b_1)}2^{(b_2)}\cdots n^{(b_n)}  \in D(\mu)$, we will denote the element $d'_{\mu}(x) \in \Delta(\mu)$ by 	
\[ \begin{bmatrix*}
1^{(a_1)} 2^{(a_2)}   \cdots   n^{(a_n)} \\
1^{(b_1)} 2^{(a_2)}  \cdots  n^{(b_n)} \end{bmatrix*}. \]

\subsection{Weight subspaces of $\Delta(\mu)$} Let $\nu \in \we(n,r)$ and  $\mu=(\mu_1,\mu_2) \in \we^+(2,r)$. According to \cite{AB}, equation (11), 
a basis of the $K$-vector space  $\Hom_S(D(\nu), \Delta(\mu))$ is in 1-1 correspondence with set $\mathrm{ST}_{\nu}(\mu)$ of standard tableaux of shape $\mu$ and weight $\nu$.

For the computations to follow, we need to make the above correspondence explicit. Let $\nu = (\nu_1,..., \nu_n) \in \we(n,r)$ and $T \in \mathrm{ST}_{\nu}(\mu)$. Let $a_i$ (respectively, $b_i$) be the number of appearances of $i$ in the first row (respectively, second row) of $T$. We note that $\nu_i = a_i+b_i$ for each $i$.  In particular we have $a_1 = \nu_1$ because of standardness of $T$. Define the map \[\phi_T:D(\nu) \to \Delta(\mu),\]\[ x_1 \otimes x_2 \otimes \cdots\otimes x_n \mapsto \sum_{i_2,...,i_n}d'_{\mu}\left(x_1x_{2i_2}(a_2)\cdots x_{ni_n}(a_n) \otimes x_{2i_2}(b_2)'\cdots x_{ni_n}(b_n)'\right), \]
where $\sum_{i_s}x_{si_s}(a_s) \otimes x_{si_s}(b_s)'$ is the image of $x_s$  under the component \[D(\nu_s) \to D(a_s,b_s)\] of the diagonalization $\Delta : DV \to DV \otimes DV$ of the Hopf algebra $DV$ for $s=2,...,n$. Thus we have that a basis of the $K$-vector space  $\Hom_S(D(\nu), \Delta(\mu))$ is the set \[\{\phi_T: T \in \mathrm{ST}_{\nu}(\mu)\}.\]

In particular, suppose $\la=(\la_1,...,\la_m) \in \we^+(n,r)$ is a partition and $\mu =(\mu_1,\mu_2) \in \we^+(2,r)$ satisfies $\mu_2 \le \la_1$. This inequality means that each tableau of shape $\mu$ that has the form \[ \begin{matrix*}[l]
1^{(\la_1)} 2^{(a_2)}  \cdots   m^{(a_m)} \\
2^{(b_2)}  \cdots  m^{(b_m)}
\end{matrix*}\] is standard. Hence we have the following result.
\begin{lemma}
Let $\lambda,\mu \in \we^{+}(n,r)$, where $\lambda=(\lambda_1,...,\lambda_m)$ and $ \mu=(\mu_1, \mu_2)$. If $\mu_2 \le \la_1$, than a basis of the $K$- vector space $\Hom_S(D(\la, \Delta(\mu))$  is given by the elements $\phi_T$, where
\[[T]=	\begin{bmatrix*}[l]
1^{(\la_1)} 2^{(a_2)}  \cdots   m^{(a_m)} \\
2^{(b_2)}  \cdots  m^{(b_m)}
\end{bmatrix*},\] are such that 
\begin{align*} &a_i, b_i \ge 0, i=2,...,m\\
&a_i+b_i=\la_i, i=2,...,m,\\
&a_2+\cdots+a_m=\mu_1-\la_1,\\
&b_2+\cdots+b_m=\mu_2.
\end{align*}
\end{lemma}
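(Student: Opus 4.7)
The plan is to reduce the statement to the general description recalled in Subsection 2.4: for any $\nu \in \we(n,r)$, the set $\{\phi_T : T \in \mathrm{ST}_{\nu}(\mu)\}$ is a basis of $\Hom_S(D(\nu), \Delta(\mu))$. Specialising to $\nu = \la$, it suffices to identify $\mathrm{ST}_{\la}(\mu)$ with the set of tableaux $T$ of the form displayed in the lemma, parametrised by nonnegative integers $a_i, b_i$ satisfying the four listed arithmetic conditions.

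First I would observe that in any standard tableau of shape $\mu=(\mu_1,\mu_2)$ every entry equal to $1$ must lie in row~1: otherwise the column-strict condition would require an entry smaller than $1$ above it, which is impossible. Since $T$ has weight $\la$, exactly $\la_1$ ones occur in $T$, and by the weak row-increase condition these fill the first $\la_1$ cells of row~1. The remainder of row~1 then involves only symbols from $\{2,\dots,m\}$, with some multiplicities $a_i \ge 0$, and row~2 consists of such symbols with multiplicities $b_i \ge 0$. The weight condition reads $a_i + b_i = \la_i$ for $i = 2,\dots,m$, while the row lengths give $\sum_{i=2}^m a_i = \mu_1 - \la_1$ and $\sum_{i=2}^m b_i = \mu_2$. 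This shows that every standard tableau of shape $\mu$ and weight $\la$ has the displayed form.

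Next I would invoke the hypothesis $\mu_2 \le \la_1$ to establish the converse: every tableau of the displayed form is in fact standard. Rows are weakly increasing by construction. The only columns possessing two entries are $j = 1,\dots,\mu_2$, and since $j \le \mu_2 \le \la_1$, position $(1,j)$ holds a $1$ while position $(2,j)$ holds an entry at least $2$; hence the columns are strictly increasing. Together with the first step this yields a bijection between $\mathrm{ST}_{\la}(\mu)$ and the solution set of the four arithmetic conditions, and the basis claim follows from Subsection~2.4.

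There is essentially no obstacle here; the content of the lemma is purely combinatorial bookkeeping, and the role of the hypothesis $\mu_2 \le \la_1$ is precisely to make the column-strict condition automatic once the $\la_1$ copies of $1$ have been placed in row~1.
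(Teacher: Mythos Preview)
Your proposal is correct and follows the same approach as the paper: the paper's argument is just the paragraph immediately preceding the lemma, noting that the general basis $\{\phi_T : T \in \mathrm{ST}_{\la}(\mu)\}$ from Subsection~2.4 specialises, under the hypothesis $\mu_2 \le \la_1$, to the displayed family because every such tableau is automatically standard. You supply more detail (in particular the observation that all $1$'s must occupy row~1, making the correspondence a genuine bijection), but the idea is identical.
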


\section{Main result}

In order to state the main result of this paper we use the following notation. If $x,y$ are positive integers, let \[R(x,y)=\gcd\{\tbinom{x}{1},  \tbinom{x+1}{2},...,\tbinom{x+y-1}{y}\}.\] If $x$ is a positive integer, let $R(x,0)=0$. 

\begin{theorem} Let $K$ be an infinite field of characteristic $p>0$ and let $n \ge r$ be positive integers. Let $\lambda,\mu \in \we^{+}(n,r)$ be partitions such that $\lambda=(\lambda_1,...,\lambda_m)$ and $\mu =(\mu_1, \mu_2)$, where $\lambda_m \neq 0$, $m \ge 2$ and $\mu_2 \le \la_1 \le \mu_1$. If $ p $ divides  all of the following integers \begin{align*}
&R(\lambda_1 -\mu_2+1,l), l=\min\{\la_2, \mu_1 -\la_1\} \\ &R(\lambda_i+1,\lambda_{i+1}), i=2,...,m-1.
\end{align*} Then the map \[\psi =\sum_{T \in \mathrm{ST}_{\la}(\mu)}\phi_T\] induces a nonzero homomorphism $\Delta(\la) \to \Delta(\mu).$ \end{theorem}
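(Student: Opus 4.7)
By Theorem 2.1, the map $\psi: D(\lambda) \to \Delta(\mu)$ induces a homomorphism $\Delta(\lambda) \to \Delta(\mu)$ if and only if $\psi \circ \square_{\lambda} = 0$. So the plan is to establish two things: (i) $\psi \circ \square_{\lambda} = 0$, and (ii) the induced map is nonzero. Part (ii) is immediate once (i) is in hand. On the weight-$\lambda$ element $x_0 = 1^{(\lambda_1)} \otimes \cdots \otimes m^{(\lambda_m)} \in D(\lambda)$, the $(a_s, b_s)$-component of the coproduct of the pure divided power $s^{(\lambda_s)}$ is the single term $s^{(a_s)} \otimes s^{(b_s)}$, so $\phi_T(x_0) = [T]$ for each $T \in \mathrm{ST}_{\lambda}(\mu)$. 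Hence $\psi(x_0) = \sum_{T \in \mathrm{ST}_{\lambda}(\mu)} [T]$, a nonzero sum of distinct basis vectors of $\Delta(\mu)$ by Theorem 2.2.

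For (i), I would fix $i \in \{1,\dots,m-1\}$ and $t \in \{1,\dots,\lambda_{i+1}\}$ and write $M(t) = D(\lambda_1, \dots, \lambda_i + t, \lambda_{i+1} - t, \dots, \lambda_m)$. Since $\square_{\lambda}$ and $\psi$ preserve weights, it suffices to check vanishing on weight-$\lambda$ generators of $M(t)$; these are monomials
\[ y \;=\; g_1 \otimes \cdots \otimes g_{i-1} \otimes g_i \otimes g_{i+1} \otimes g_{i+2} \otimes \cdots \otimes g_m, \]
and I would focus on the "generic" ones where $g_s = s^{(\lambda_s)}$ for $s \notin \{i, i+1\}$ and the mixing between rows $i, i+1$ is recorded entirely by $g_i \otimes g_{i+1}$. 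Applying $\square_{\lambda}$ to $y$ via the formulas $\Delta(uv) = \Delta(u)\Delta(v)$ in $DV$ and $u^{(p)} v^{(q)} = \binom{p+q}{p}(uv)^{(p+q)}$, and then feeding the output into each $\phi_T$, yields an expression in $\Delta(\mu)$ whose structural constants are binomial coefficients.

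For $i \ge 2$, every letter $s \notin \{i, i+1\}$ sits in a fixed position of each $T \in \mathrm{ST}_{\lambda}(\mu)$ that is not touched by the relation, so the coefficient bookkeeping localizes to the two rows of $T$ carrying the letters $i$ and $i+1$. Summing $\phi_T(\square_{\lambda}(y))$ over $T \in \mathrm{ST}_{\lambda}(\mu)$, the coefficients collapse to linear combinations of $\binom{\lambda_i+1}{1},\binom{\lambda_i+2}{2},\dots,\binom{\lambda_i+\lambda_{i+1}}{\lambda_{i+1}}$. Since the gcd of these is $R(\lambda_i+1,\lambda_{i+1})$, the hypothesis $p \mid R(\lambda_i+1, \lambda_{i+1})$ makes each coefficient zero in $K$.

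The case $i = 1$ is the principal obstacle. Since $\mu_2 \le \lambda_1$, every letter $1$ must land in the top row of $\mu$. When $\square_{\lambda}$ migrates some $1$'s into position $2$ of $y$, the resulting outputs $d'_{\mu}(\cdot)$ are generally non-standard (a $1$ appears in row $2$, or a column has the wrong order), and I would invoke the straightening law developed in Section 4 to rewrite them as linear combinations of standard $[T]$'s. After this rewriting, the aggregated coefficients organize into linear combinations of $\binom{\lambda_1 - \mu_2 + 1}{1}, \dots, \binom{\lambda_1 - \mu_2 + l}{l}$ with $l = \min(\lambda_2, \mu_1 - \lambda_1)$, which vanish in $K$ under the hypothesis $p \mid R(\lambda_1 - \mu_2 + 1, l)$. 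The hardest part of the proof is tracking this straightening bookkeeping cleanly enough to see that precisely these binomials emerge after summing over $\mathrm{ST}_{\lambda}(\mu)$; once that reduction is achieved, the divisibility hypothesis delivers $\psi \circ \square_{\lambda} = 0$, and combined with (ii) this gives the desired nonzero homomorphism $\Delta(\lambda) \to \Delta(\mu)$.
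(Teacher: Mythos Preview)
Your outline matches the paper's proof in Section~5: the same split into the $i=1$ case (requiring Lemma~4.2 for straightening and Lemma~4.1(2) to collapse the coefficient of each standard basis element to exactly $\binom{\lambda_1-\mu_2+t}{t}$) and the $i\ge 2$ case (where Vandermonde collapses the coefficient to $\binom{\lambda_i+t}{t}$), together with the non-vanishing check on $1^{(\lambda_1)}\otimes\cdots\otimes m^{(\lambda_m)}$. One small correction: the reduction is not to ``weight-$\lambda$ generators'' of $M(t)$ but to its single highest weight vector, of weight $(\lambda_1,\dots,\lambda_i+t,\lambda_{i+1}-t,\dots,\lambda_m)$, which generates $M(t)$ as an $S$-module---weight preservation alone does not justify restricting to weight $\lambda$.
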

\begin{remark}\normalfont Consider the symmetric group $\mathfrak{S}_r$ on $r$ symbols. For a partition $\la$ of $r$, let $\Sp(\la)$ be the corresponding Specht module defined in Section 6.3 of \cite{Gr}. From Theorem 3.7 of \cite{CL}, we have  \[\dim \Hom_S(\Delta(\la),\Delta(\mu)) \le \dim \Hom_{\mathfrak{S}_r}(\Sp(\mu),\Sp(\la))\] for all partitions  $\la, \mu$ of $r$. (In fact we have equality if $p>2$ according to loc. cit.) Hence our Theorem 3.1 may be considered as a non vanishing result for homomorphisms between Specht modules.
\end{remark}

\begin{remarks}\normalfont Here we make some comments concerning the inequalities $n \ge r$, $m \ge 2$ and $\mu_2 \le \la_1 \le \mu_1$ in the statement of the above theorem.

(1) The assumption $n \ge r$ is needed so that the Weyl modules $\Delta(\la), \Delta(\mu)$ are nonzero. As is usual with such results, it turns out that this
assumption may be relaxed to $n \ge m$, since $m$ is the number of parts of the partition $\la$. This follows from the proof of the theorem to be given in Section 5.

It is well known that if $\Hom_S(\Delta(\la), \Delta(\mu)) \neq 0,$ then $\la \trianglelefteq \mu$ in the dominance ordering, meaning in particular that $\la_1\le \mu_1$.

If $m=1$, then by the previous remark,  $\Hom_S(\Delta(\la), \Delta(\mu)) = 0$, unless $\mu = \la $, in which case  $\Hom_S(\Delta(\la), \Delta(\mu)) =K$ by \cite{Jan}, the analogue for Weyl modules of II.2.8 Proposition.

(2) In the above remarks, the corresponding inequalities were needed to avoid trivial situations. The nature of the assumption $\mu_2 \le \la_1$ is different. There are cases where nonzero homomorphisms $\Delta(\la) \to \Delta(\mu)$ exist if $\mu_2>\la_1$. For example, let $n=3$, $p=2$, $\la=(2,2,2)$  and $\mu=(3,3)$. One may check that the map $\phi_T$, where $T=	\begin{matrix*}[l]
1^{(2)} 2\\
2 3^{(2)}
\end{matrix*},$ induces a nonzero map $\Delta(\la) \to \Delta(\mu)$. It would be interesting to find general results.
The main point for us of the assumption $\mu_2 \le \la_1$ is that every tableau $T$ in Lemma 2.3 is standard.

(3) If $\la_1=\mu_1$, then $R(\la_1-\mu_2+1,l)=0$ and the first divisibility condition of the theorem holds for all $p$. The remaining divisibility conditions are exactly those for which we have $\Hom_{S'}(\Delta(\la_2,...,\la_m),\Delta(\mu_2)) \neq 0,$ where $S'=S_K(n,r-\la_1)$. This follows, for example, from Theorem 3.1 of \cite{MS}. Hence in this case we have an instance of row removal which states that $\dim \Hom_S(\Delta(\la),\Delta(\mu)) = \dim \Hom_{S'}(\Delta(\la_2,...,\la_m),\Delta(\mu_2))$. See the paper by Fayers and Lyle \cite{FL}, Theorem 2.2 (stated for Specht modules), or the paper by Kulkarni \cite{Ku}, Proposition 1.2. 
\end{remarks}

For further use we note that the divisibility assumptions of Theorem 3.1 may be stated in a different way. For a positive integer $y$ let $l_{p}(y)$ be the least integer $i$ such that $p^i>y$. From James \cite{Jam}, Corollary 22.5, we have the following result. 

\begin{lemma}[\cite{Jam}]
Let $x \ge y$ be positive integers. Then $p$ divides $R(x,y)$ if and only if $p^{l_{p}(y)} $ divides $x$.
\end{lemma}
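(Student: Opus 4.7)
The plan is to reduce the statement to Kummer's theorem, which says that the $p$-adic valuation of $\binom{a+b}{a}$ equals the number of carries produced when $a$ and $b$ are added in base $p$. Since $R(x,y)$ is the gcd of the binomials $\binom{x+j-1}{j}=\binom{(x-1)+j}{j}$ for $1\le j\le y$, the condition $p\mid R(x,y)$ is equivalent to the assertion that for every $j$ in that range, adding $j$ to $x-1$ in base $p$ produces at least one carry. The task then becomes to show that this digit-theoretic condition is equivalent to the vanishing of the last $l_p(y)$ base-$p$ digits of $x$.

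For the forward direction I would assume $p^{l_p(y)}\mid x$, so that the bottom $l_p(y)$ base-$p$ digits of $x$ vanish and hence those of $x-1$ are all equal to $p-1$. Any $j$ with $1\le j\le y<p^{l_p(y)}$ has a nonzero digit somewhere in the bottom $l_p(y)$ positions, and adding that digit to $p-1$ in the corresponding position of $x-1$ forces a carry. Therefore $p\mid\binom{x+j-1}{j}$ for every admissible $j$, and $p\mid R(x,y)$.

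For the converse I would argue by contrapositive. Suppose $p^{l_p(y)}\nmid x$, and let $k$ be the position of the lowest nonzero base-$p$ digit $a_k$ of $x$. Then $k<l_p(y)$, so $p^k\le p^{l_p(y)-1}\le y$, and $j=p^k$ lies in the range defining $R(x,y)$. The base-$p$ expansion of $x-1$ has digit $p-1$ in every position below $k$, digit $a_k-1$ at position $k$, and agrees with $x$ above position $k$. Adding $p^k$ to $x-1$ merely raises the digit at position $k$ from $a_k-1$ to $a_k<p$, with no carry anywhere, so $\binom{x+p^k-1}{p^k}$ is not divisible by $p$, and hence $p\nmid R(x,y)$.

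The only delicate point is the bookkeeping around the base-$p$ expansion of $x-1$, where the borrow must be tracked to ensure it propagates exactly up to the lowest nonzero digit of $x$; everything else is a direct application of Kummer's theorem. Since the statement is attributed to James \cite{Jam}, one could equally well present it as a short Kummer-based derivation or simply cite loc.\ cit.
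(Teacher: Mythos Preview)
Your argument via Kummer's theorem is correct and complete. Note, however, that the paper does not actually supply a proof of this lemma: it is simply quoted from James \cite{Jam}, Corollary 22.5, with no accompanying argument. So there is nothing in the paper to compare your proof against; your self-contained derivation goes beyond what the paper does rather than paralleling it. (As a minor observation, the hypothesis $x\ge y$ is not used in your proof: in the forward direction $p^{l_p(y)}\mid x$ already forces $x\ge p^{l_p(y)}>y$, and in the contrapositive you only need $p^k\le y$, which follows from $k<l_p(y)$ alone.)
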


\section{Straightening}

For the proof of Theorem 3.1 we will need the following identities involving binomial coefficients. Our convention is that $\tbinom{a}{b}=0$ if $b>a$ or $b<0$. 
\begin{lemma} \;\begin{enumerate}

\item Let $a, m_1, ..., m_s$ be nonnegative integers and $m=m_1+\cdots+m_s$. 
\begin{enumerate}
\item We have \[\sum_{j_1+\cdots+j_s = a}\tbinom{m_1}{ j_1}\cdots \tbinom{m_s}{j_s}=\tbinom{m}{a},\]
where the sum ranges over all nonnegative integers $j_1,...,j_s$ such that $j_1+\cdots+j_s = a.$
\item If $m>0$, then \[\sum_{j_0+\cdots+j_s = m}(-1)^{j_0}\tbinom{m_1}{ j_1}\cdots \tbinom{m_s}{j_s}=0,\]
where the sum ranges over all nonnegative integers $j_0,...,j_s$ such that $j_0+\cdots+j_s = m.$
\end{enumerate}

\item Let $a,b,c$ be nonnegative integers such that $ b \le a$. Then
\begin{align*}&\sum_{j=0}^{c}(-1)^{c-j}\tbinom{a+j}{j}\tbinom{b}{c-j}=\tbinom{a-b+c}{c}= \sum_{j=0}^{c}(-1)^{j}\tbinom{a+c-j}{c-j}\tbinom{b}{j}.\end{align*}


\end{enumerate}
\end{lemma}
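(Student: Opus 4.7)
The plan is to derive all three identities by extracting coefficients from suitable formal power series in $\mathbb{Z}[[x]]$, using the two standard expansions $(1+x)^m = \sum_{k\ge 0}\tbinom{m}{k}x^k$ and $(1-x)^{-(a+1)} = \sum_{k\ge 0}\tbinom{a+k}{k}x^k$. Since all three are identities of integers, it is enough to argue over $\mathbb{Z}$.

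For part (1)(a), I would compare the coefficients of $x^a$ on the two sides of $(1+x)^{m_1}\cdots(1+x)^{m_s} = (1+x)^m$. Expanding each factor and collecting terms produces exactly the stated sum on the left, while the right-hand side contributes $\tbinom{m}{a}$.

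For part (1)(b), I would first fix $j_0$ and apply (1)(a) to the inner sum over $j_1,\ldots,j_s$ with $j_1+\cdots+j_s = m-j_0$, which evaluates to $\tbinom{m}{m-j_0}=\tbinom{m}{j_0}$. The whole expression then collapses to $\sum_{j_0=0}^m(-1)^{j_0}\tbinom{m}{j_0} = (1-1)^m$, which is zero by the binomial theorem because $m>0$.

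For part (2), the first sum is the coefficient of $x^c$ in the product $(1-x)^b\cdot(1-x)^{-(a+1)} = (1-x)^{-(a-b+1)}$; the hypothesis $b\le a$ is exactly what guarantees that the exponent $-(a-b+1)$ is negative, so this coefficient equals $\tbinom{a-b+c}{c}$. The second equality follows by the same generating function computation with the order of the two factors reversed, or equivalently by the substitution $j\mapsto c-j$ in the summation index. No real obstacle arises in any part; the only subtlety is remembering to invoke $b\le a$ so that the negative-binomial expansion of $(1-x)^{-(a-b+1)}$ is legitimate.
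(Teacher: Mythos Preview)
Your proposal is correct. For parts (1)(a) and (1)(b) you do exactly what the paper does: (a) is Vandermonde's identity (which you prove by the standard coefficient extraction), and for (b) you fix $j_0$, collapse the inner sum via (a), and recognise the alternating binomial sum as $(1-1)^m=0$. For part (2) the paper simply cites an external lemma (Lemma~2.6 of \cite{Ly1} at $q=1$) for one of the two equalities and obtains the other by the substitution $j\mapsto c-j$; your generating-function argument with $(1-x)^b(1-x)^{-(a+1)}=(1-x)^{-(a-b+1)}$ is a self-contained replacement for that citation, and you use the same substitution to pass between the two sums. So the only real difference is that you supply a short direct proof where the paper appeals to the literature.
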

\begin{proof} (1) The identity in (a) is Vandermonde's identity. For (b) we have

\begin{align*}
\sum_{j_0+\cdots+j_s = m}(-1)^{j_0}\tbinom{m_1}{ j_1}\cdots \tbinom{m_s}{j_s}&=
\sum_{j_0=0}^{m} \; \sum_{j_1+\cdots+j_s = m-j_0}(-1)^{j_0}\tbinom{m_1}{ j_1}\cdots \tbinom{m_s}{j_s}\\&
=\sum_{j_0=0}^{m} (-1)^{j_0}\sum_{j_1+\cdots+j_s = m-j_0}\tbinom{m_1}{ j_1}\cdots \tbinom{m_s}{j_s}\\&=
\sum_{j_0=0}^{m} (-1)^{j_0}\tbinom{m}{m-j_0}\\&=
0.
\end{align*}

(2) The second identity is Lemma 2.6 of \cite{Ly1} for $q=1$. The first follows from the second with the substitution $ j \mapsto c-j$.



\end{proof}

We will also need the following explicit form of the straightening law concerning violations of standardness in the first column. 
\begin{lemma}Let $\mu=(\mu_1, \mu_2) \in \wedge^+(n,r)$, $(a_1,...,a_n) \in \wedge (n, \mu_1)$ and $(b_1,...,b_n) \in \wedge (n, \mu_2)$. \begin{enumerate} \item If $a_1+b_1 > \mu_1$, then $\begin{bmatrix*}
1^{(a_1)}   \cdots   n^{(a_n)} \\
1^{(b_1)}  \cdots  n^{(b_n)}
\end{bmatrix*}=0$. \item If $a_1+b_1 \le \mu_1$, then in $\Delta (\mu)$ we have
\begin{equation*}
\begin{bmatrix*}
1^{(a_1)}   \cdots   n^{(a_n)} \\
1^{(b_1)}  \cdots  n^{(b_n)}
\end{bmatrix*}=(-1)^{b_1}\sum_{i_2,...,i_n} \tbinom{b_2+i_2}{b_2} \cdots \tbinom{b_n+i_n}{b_n}\begin{bmatrix*}[l]
1^{(a_1+b_1)}  2^{(a_2-i_2)}  \cdots   n^{(a_n-i_n)} \\
\noindent 2^{(b_2+i_2)} \cdots n^{(b_n+i_n)}
\end{bmatrix*},\end{equation*}
where the sum ranges over all nonnegative integers $i_2, ..., i_n$ such that $i_2+\cdots+i_n=b_1$ and $i_s \le a_s$ for all $s=2,...,n$. \end{enumerate}
\end{lemma}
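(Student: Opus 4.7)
My plan is to prove (1) and (2) together by induction on $b_1$. The base case $b_1=0$ is trivial: $a_1>\mu_1$ is impossible in (1), and in (2) the right-hand sum consists of the single summand (all $i_k=0$) matching the bracket on the left.

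For the inductive step, apply the relation of Theorem 2.1 with $t=b_1$ and the choices
\[
y=1^{(a_1+b_1)}\,2^{(a_2)}\cdots n^{(a_n)}\in D(\mu_1+b_1),\qquad z=2^{(b_2)}\cdots n^{(b_n)}\in D(\mu_2-b_1).
\]
Expanding $\Delta(y)$ in the $D(\mu_1)\otimes D(b_1)$ component as a sum over tuples $(j_1,\dots,j_n)$ with $j_1+\cdots+j_n=b_1$ and $j_k\le a_k$ for $k\ge 2$, and multiplying $y''$ by $z$ via $v^{(i)}v^{(j)}=\binom{i+j}{j}v^{(i+j)}$, the relation becomes the assertion that the weighted sum---with weight $\prod_{k\ge 2}\binom{j_k+b_k}{b_k}$---of brackets with first row $1^{(a_1+b_1-j_1)}2^{(a_2-j_2)}\cdots n^{(a_n-j_n)}$ and second row $1^{(j_1)}2^{(j_2+b_2)}\cdots n^{(j_n+b_n)}$ vanishes in $\Delta(\mu)$. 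Three kinds of summands appear: (a) $j_1=b_1$ (forcing $j_k=0$ for $k\ge 2$) reproduces the bracket we wish to analyze; (b) $j_1=0$, when such a term exists, equals up to the sign $(-1)^{b_1}$ the sum $S$ on the right-hand side of~(2); (c) intermediate summands $0<j_1<b_1$ are brackets whose second-row ``$b_1$-value'' is $j_1<b_1$, to which the inductive hypothesis applies.

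For~(1), $a_1+b_1>\mu_1$ gives $\sum_{k\ge 2}a_k=\mu_1-a_1<b_1$, so no type-(b) summand exists, and each type-(c) summand has $a_1'+b_1'=a_1+b_1>\mu_1$ with $b_1'<b_1$, hence vanishes by the inductive hypothesis for~(1); the identity then reduces to the original bracket being $0$. For~(2), apply the inductive hypothesis for~(2) to each type-(c) bracket, reindex the resulting double sum via $c_k:=j_k+i_k$ (where the $i_k$ are the inner summation variables from the inductive hypothesis), and use the elementary identity $\binom{b_k+j_k}{b_k}\binom{b_k+c_k}{b_k+j_k}=\binom{b_k+c_k}{b_k}\binom{c_k}{j_k}$ together with Lemma 4.1(1a) to collapse the inner Vandermonde sum. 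The coefficient of each straightened bracket becomes $\prod_{k\ge 2}\binom{b_k+c_k}{b_k}$ times $\sum_{j_1=1}^{b_1-1}(-1)^{j_1}\binom{b_1}{j_1}$, which by Lemma 4.1(1b) equals $-(1+(-1)^{b_1})$; substituting back yields exactly the formula of~(2) with sign $(-1)^{b_1}$.

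The main obstacle is the combinatorial bookkeeping of part~(2): once the change of variables $c_k=j_k+i_k$ is identified, Lemma 4.1 finishes the calculation cleanly, but recognizing the right reindexing and converting the resulting sum to the form of Lemma 4.1(1b) is where the real work lies.
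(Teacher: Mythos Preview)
Your proof is correct and follows essentially the same approach as the paper: induction on $b_1$, the relation coming from the element of $D(\mu_1+b_1,\mu_2-b_1)$, the binomial identity $\binom{b_k+j_k}{b_k}\binom{b_k+c_k}{b_k+j_k}=\binom{b_k+c_k}{b_k}\binom{c_k}{j_k}$ after reindexing $c_k=j_k+i_k$, and Lemma~4.1 to collapse the alternating sum. The only difference worth noting is that the paper dispatches part~(1) in one line by a weight argument (no element of $\Delta(\mu)$ has weight whose first coordinate exceeds $\mu_1$), whereas you fold~(1) into the same induction; and the paper keeps the $j_1=0$ terms together with the $0<j_1<b_1$ terms and applies Lemma~4.1(1)(b) directly, rather than first invoking Vandermonde, but this is purely organizational.
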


\begin{proof} (1) This is clear since there is no element in $\Delta(\mu)$ of weight $(\nu_1,...,\nu_n)$ satisfying $\nu_1 > \mu_1$.

(2) We proceed by induction on $b_1$, the case $b_1=0$ being clear. Suppose $b_1>0$. Consider the element $x \in D(\mu_1+b_1, \mu_2-b_1)$, where \[x=1^{(a_1+b_1)}2^{(a_2)}\cdots n^{(a_n)} \otimes 2^{(b_2)}\cdots n^{(b_n)},\] and the map
\begin{align*}
\delta : D(\mu_1+b_1, \mu_2-b_1) \xrightarrow{\Delta \otimes 1} 
D(\mu_1, b_1, \mu_2-b_1) \xrightarrow{1 \otimes \eta }
D(\mu_1, \mu_2).
\end{align*}
According to the analogue of Lemma II.2.9 of \cite{ABW} for divided powers in place of exterior powers, we have $d'_{\mu}(\delta(x))=0$ in $\Delta(\mu_1,\mu_2)$. Thus

\begin{equation*}
\begin{bmatrix*}
1^{(a_1)}   \cdots   n^{(a_n)} \\
1^{(b_1)}  \cdots  n^{(b_n)}
\end{bmatrix*}=-\sum_{j_1,...,j_n} \tbinom{b_2+j_2}{b_2} \cdots \tbinom{b_n+j_n}{b_n}\begin{bmatrix*}[l]
1^{(a_1+b_1-j_1)}  2^{(a_2-j_2)}  \cdots   n^{(a_n-j_n)} \\
\noindent 1^{(j_1)}2^{(b_2+j_2)} \cdots n^{(b_n+j_n)}
\end{bmatrix*},
\end{equation*}
where the sum ranges over all nonnegative integers $j_1, ..., j_n$ such that $j_1+\cdots+j_n=b_1$, $j_1<b_1$ and $j_s \le a_s$ for all $s=2,...,n$. Let $X$ be the right hand side of the above equality. By induction we have

\begin{align*}
X=-\sum_{j_1,...,j_n} \tbinom{b_2+j_2}{b_2} \cdots \tbinom{b_n+j_n}{b_n}(-1)^{j_1}\sum_{k_2,...,k_n} \tbinom{b_2+j_2+k_2}{b_2+j_2} \cdots \tbinom{b_n+j_n+k_n}{b_n+j_n} \\ \begin{bmatrix*}[l]
1^{(a_1+b_1)}  2^{(a_2-j_2-k_2)}  \cdots   n^{(a_n-j_n-k_n)} \\
\noindent 2^{(b_2+j_2+k_2)} \cdots n^{(b_n+j_n+k_n)}
\end{bmatrix*},
\end{align*}
where the new sum  ranges over all nonnegative integers $k_2, ..., k_n$ such that $k_2+\cdots+k_n=j_1$ and $k_s \le a_s-j_s$ for all $s=2,...,n$. Using the identities 
\[\tbinom{b_s+j_s}{b_s} \tbinom{b_s+j_s+k_s}{b_s+j_s} =\tbinom{b_s+j_s+k_s}{b_s} \tbinom{j_s+k_s}{j_s} \]
for $s=2,...,n$, we obtain 

\begin{align*}
X=-\sum_{j_1,...,j_n,k_2,...,k_n}(-1)^{j_1} \tbinom{b_2+j_2+k_2}{b_2} \cdots \tbinom{b_n+j_n+k_n}{b_n}\tbinom{j_2+k_2}{j_2} \cdots \tbinom{j_n+k_n}{j_n} \\ \begin{bmatrix*}[l]
1^{(a_1+b_1)}  2^{(a_2-j_2-k_2)}  \cdots   n^{(a_n-j_n-k_n)} \\
\noindent 2^{(b_2+j_2+k_2)} \cdots n^{(b_n+j_n+k_n)}
\end{bmatrix*}.
\end{align*}
The coefficient $c$ of \[\begin{bmatrix*}[l]
1^{(a_1+b_1)}  2^{(a_2-i_2)}  \cdots   n^{(a_n-i_n)} \\
\noindent 2^{(b_2+i_2)} \cdots n^{(b_n+i_n)}
\end{bmatrix*} \] in the right hand side of the above equation is equal to \[-\sum_{\substack{j_1,...,j_n,k_2,...,k_n \\ j_s+k_s=i_s}}(-1)^{j_1} \tbinom{b_2+j_2+k_2}{b_2} \cdots \tbinom{b_n+j_n+k_n}{b_n}\tbinom{j_2+k_2}{j_2} \cdots \tbinom{j_n+k_n}{j_n},\]
where the sum is restricted over those $j_1,...,j_n$ and $k_2,...,k_n$ that satisfy the additional conditions $j_s+k_s=i_s$ for all $s=2,...,n.$ Hence \begin{align*}c&=-\sum_{j_1,...,j_n}(-1)^{j_1} \tbinom{b_2+i_2}{b_2} \cdots \tbinom{b_n+i_n}{b_n}\tbinom{i_2}{j_2} \cdots \tbinom{i_n}{j_n} \\&
=-\tbinom{b_2+i_2}{b_2} \cdots \tbinom{b_n+i_n}{b_n}\sum_{j_1,...,j_n}(-1)^{j_1} \tbinom{i_2}{j_2} \cdots \tbinom{i_n}{j_n}.\\&
\end{align*} Remembering that in the last sum we have $j_1<b_1$,  Lemma 4.1(1)(b) yields \[\sum_{j_1,...,j_n}(-1)^{j_1} \tbinom{i_2}{j_2} \cdots \tbinom{i_n}{j_n}=0-(-1)^{b_1}.\]
Thus $c=(-1)^{b_1}\tbinom{b_2+i_2}{b_2} \cdots \tbinom{b_n+i_n}{b_n}.$
\end{proof}

\section{Proof of the main theorem}

Consider the map $\psi \in \Hom_S(D(\la), \Delta(\mu))$ given by the sum  \[\psi = \sum_{T \in \mathrm{ST}_{\la}( \mu)}\phi_T\] in the statement of Theorem 3.1 We will show, according to Theorem 2.1, that $\psi(x)=0$ for every $x \in Im(\square_\la)$. First we look at the relations corresponding to rows 1 and 2 of $\Delta(\la)$.

\textbf{Relations from rows 1 and 2}

Let $x=1^{(\la_1)}\otimes 1^{(t)}2^{(\la_2-t)}\otimes3^{(\la_3)}\cdots m^{(\la_m)} \in Im(\square_\la),$ where $t\le \la_2$, and let  $T \in \mathrm{ST}_{\la}(\mu).$ Then $T$ is of the form \[T=\begin{matrix*}[l]
1^{(\la_1)}2^{(a_2)}   \cdots   m^{(a_m)} \\
2^{(b_2)}  \cdots  m^{(b_m)} \end{matrix*} \in \mathrm{ST}_{\la}(\mu),\]
where the $a_i, b_i$ satisfy the conditions of Lemma 2.3. Using the definition of $\phi_T$ from 2.4, we have
\begin{align*}\phi_T(x)=
\sum_{i\le t}\tbinom{\la_1+i}{i}\begin{bmatrix*}[l]
1^{(\la_1+i)}2^{(a_2-i)}3^{(a_3)}  \cdots   m^{(a_m)} \\
1^{(t-i)}2^{(\la_2-t-a_2+i)}3^{(b_3)}  \cdots  m^{(b_m)}
\end{bmatrix*}.
\end{align*} If $(\la_1+i)+(t-i) \ge \mu_1$, then by the first part of Lemma 4.2 we obtain $\phi_T(x)=0$. Hence we may assume that $t \le min\{\la_2, \mu_1-\la_1\}. $ Using the second part of Lemma 4.2, we have

\begin{align*}\phi_T(x)=&
\sum_{i\le t}\tbinom{\la_1+i}{i}(-1)^{t-i} \sum_{k_2+\cdots+k_m=t-i}\tbinom{b_2-k_3-\cdots-k_m}{k_2}\tbinom{b_3+k_3}{k_3}\cdots\tbinom{b_m+k_m}{k_m}\\&
\begin{bmatrix*}[l]
1^{(\la_1+t)}2^{(a_2+k_3+\cdots+k_m)}3^{(a_3-k_3)}  \cdots   m^{(a_m-k_m)} \\
2^{(b_2-k_3-\cdots-k_m)}3^{(b_3+k_3)}  \cdots  m^{(b_m+k_m)}
\end{bmatrix*}.\end{align*}
Let $c \in K$ be the coefficient of $\begin{bmatrix*}[l]
1^{(\la_1+t)}2^{(a_2+k_3+\cdots+k_m)}3^{(a_3-k_3)}  \cdots   m^{(a_m-k_m)} \\
2^{(b_2-k_3-\cdots-k_m)}3^{(b_3+k_3)}  \cdots  m^{(b_m+k_m)}
\end{bmatrix*} $ in the right hand side of the last equation and let $k=k_3+\cdots+k_m$. Then 
\begin{align*}c&=
\left(\sum_{i=0}^{t}\tbinom{\la_1+i}{i}(-1)^{t-i}\tbinom{b_2-k}{t-k-i}\right) \tbinom{b_3+k_3}{k_3}\cdots\tbinom{b_m+k_m}{k_m}\\&=
(-1)^k\left(\sum_{i=0}^{t-k}\tbinom{\la_1+i}{i}(-1)^{t-k-i}\tbinom{b_2-k}{t-k-i}\right) \tbinom{b_3+k_3}{k_3}\cdots\tbinom{b_m+k_m}{k_m}\\&=
(-1)^{k}\tbinom{\la_1-b_2+t}{t-k}\tbinom{b_3+k_3}{k_3}\cdots\tbinom{b_m+k_m}{k_m},
\end{align*}
where in the third equality we used the first identity of Lemma 4.1 (2). Thus 

\begin{align*}\phi_T(x)=&
\sum_{k_3,...,k_m}(-1)^k\tbinom{\la_1-b_2+t}{t-k}\tbinom{b_3+k_3}{k_3}\cdots\tbinom{b_m+k_m}{k_m}
\begin{bmatrix*}[l]
1^{(\la_1+t)}2^{(a_2+k)}3^{(a_3-k_3)}  \cdots   m^{(a_m-k_m)} \\
2^{(b_2-k)}3^{(b_3+k_3)}  \cdots  m^{(b_m+k_m)}
\end{bmatrix*},\end{align*}
where $k=k_3+\cdots+k_m$ and the sum ranges over all nonnegative integers $k_3,...,k_m$
such that  $k \le b_2$ and $k_s \le a_s$ for all $s=3,...,m$.

By summing with respect to $T \in \mathrm{ST}_{\la}({\mu})$ and using Lemma 2.3 we obtain

\begin{align}\psi(x)=&\sum_{b_2,...,b_m}\sum_{k_3,...,k_m}(-1)^k\tbinom{\la_1-b_2+t}{t-k}\tbinom{b_3+k_3}{k_3}\cdots\tbinom{b_m+k_m}{k_m}\\&
\begin{bmatrix*}[l]
1^{(\la_1+t)}2^{(a_2+k)}3^{(a_3-k_3)}  \cdots   m^{(a_m-k_m)} \\
2^{(b_2-k)}3^{(b_3+k_3)}  \cdots  m^{(b_m+k_m)}
\end{bmatrix*}\nonumber,\end{align}
where the new sum is over all nonnegative integers $b_2,...,b_m$ such that $b_i \le \la_i  (i=2,...,m)$ and $b_2+\cdots+b_m=\mu_2$. 

Fix \[[S]=\begin{bmatrix*}[l]
1^{(\la_1+t)}2^{(a_2+k)}3^{(a_3-k_3)}  \cdots   m^{(a_m-k_m)} \\
2^{(b_2-k)}3^{(b_3+k_3)}  \cdots  m^{(b_m+k_m)}
\end{bmatrix*} \in \Delta(\mu)\]
in the right hand side of (5.1) and let $q=\mu_2-(b_3+k_3)-\cdots-(b_m+k_m).$ Then $q= b_2-k$. The coefficient of $[S]$ in (5.1) is equal to 

\begin{align*}
&\sum_{k}(-1)^k\tbinom{\la_1-q-k+t}{t-k} \sum_{k_3+\cdots+k_m=k}\tbinom{b_3+k_3}{k_3} \cdots \tbinom{b_m+k_m}{k_m}\\
&=\sum_{k}(-1)^k\tbinom{\la_1-q-k+t}{t-k} \tbinom{\mu_2 -q}{k}\\&
=\tbinom{\la_1-\mu_2+t}{t}\\&
=0,
\end{align*}
where in the first equality we used Lemma 4.1(1)(a) and in the second equality we used the second identity of Lemma 4.1(2).

\textbf{Relations from rows $i$ and $i+1$ ($i>1$).}

This computation is similar to the previous one but simpler as there is no straightening. Let $y=1^{(\la_1)}\otimes\cdots\otimes {i}^{(\la_i)} \otimes i^{(t)}(i+1)^{(\la_{i+1}-t)} \otimes\cdots\otimes m^{(\la_m)} \in Im(\square_\la),$ where $ i>1$ and $t \le \la_{i+1}$. As before let  \[T=\begin{matrix*}[l]
1^{(a_1)}   \cdots   m^{(a_m)} \\
2^{(b_2)}  \cdots  m^{(b_m)} \end{matrix*} \in \mathrm{ST}_{\la}(\mu).\]
The definition of $\phi_T$ yields
\begin{align*}\phi_T(y)=
\sum_{j\le t}\tbinom{a_i+j}{j}\tbinom{b_i+t-j}{t-j}\begin{bmatrix*}[l]
1^{(\la_1)}2^{(a_2)}\cdots i^{(a_i+j)}(i+1)^{(a_{i+1}-j)}  \cdots   m^{(a_m)} \\
2^{(b_2)}\cdots i^{(b_i+t-j)} (i+1)^{(b_{i+1}-t+j)}\cdots  m^{(b_m)}
\end{bmatrix*}.\end{align*}

By summing with respect to $T \in \mathrm{ST}(\la, \mu)$ and using Lemma 2.3 we have

\begin{align}\psi(y)=
&\sum_{b_2,...,b_m}\sum_{j\le t}\tbinom{\la_i - b_i+j}{j}\tbinom{b_i+t-j}{t-j}\\&\begin{bmatrix*}[l]
1^{(\la_1)}2^{(\la_2-b_2)}\cdots i^{(\la_i-b_i+j)}(i+1)^{(\la_{i+1}-b_{i+1}-j)}  \cdots   m^{(\la_m-b_m)} \\
2^{(b_2)}\cdots i^{(b_i+t-j)} (i+1)^{(b_{i+1}-t+j)}\cdots  m^{(b_m)}\nonumber
\end{bmatrix*}\end{align}
where the new sum ranges over all nonnegative integers $b_2,...,b_m$ such that $b_i \le \la_i \; (i=2,...,m)$ and $b_2+\cdots+b_m=\mu_2$.

Fix \[[S]=\begin{bmatrix*}[l]
1^{(\la_1)}2^{(\la_2-b_2)}\cdots i^{(\la_i-b_i+j)}(i+1)^{(\la_{i+1}-b_{i+1}-j)}  \cdots   m^{(\la_m-b_m)} \\
2^{(b_2)}\cdots i^{(b_i+t-j)} (i+1)^{(b_{i+1}-t+j)}\cdots  m^{(b_m)}\nonumber
\end{bmatrix*} \in \Delta(\mu)\]
in the right hand side of (5.3) and let $q=b_i-j$. The coefficient of $[S]$ in (5.3) is equal to

\begin{align*}\sum_{j\le t}\tbinom{\la_j - q}{j}\tbinom{t+q}{t-j} =\tbinom{\la_i+t}{t}=0,
\end{align*}
where in the first equality we used Lemma 4.1 (1)(a).

We have shown thus far that the map  $\psi = \sum_{T \in \mathrm{ST}(\la, \mu)}\phi_T$ induces a homomorphism of $S$-modules
$\bar{\psi}: \Delta(\la) \to \Delta(\mu)$ and it remains to be shown that $\bar{\psi} \neq 0$. Let $z=1^{(\la_1)}\otimes\cdots\otimes m^{(\la_m)} \in D(\la)$ and  $T \in \mathrm{ST}_{\la}(\mu).$ Then from the definition of $\phi_T$ we have $ \phi_T(x)=[T]$ and hence \[\psi(x)=\sum_{T \in \mathrm{ST}_{\la}(\mu)}[T].\] The right hand side is a  sum of distinct basis elements in $\Delta(\mu)$ (each with coefficient 1) according to Theorem 2.2 and hence nonzero. The proof is complete.

\begin{remark}\normalfont
Lyle has shown in \cite{Ly2}, Propositions 2.19 through 2.27 and subsection 3.3, that the homomorphism spaces between Specht modules corresponding to partitions $\la=(\la_1,...,\la_n), \mu=(\mu_1,\mu_2)$ of $r$ with $\mu_2 \le \la_1$, over the complex Hecke algebra $\mathcal{H}=\mathcal{H}_{\mathbb{C},q}(\mathfrak{S}_r)$ of the symmetric group $\mathfrak{S}_r$, where $q$ is a complex root of unity, are at most 1 dimensional. Furthermore she proves exactly when they are nonzero and provides a generator which turns out to correspond to the sum of all standard tableaux in $\mathrm{ST}_{\la}(\mu)$. (Note that our $\la, \mu$ are reversed). In the statement of Theorem 3.1 a similar map is considered and there are some technical similarities between the proof of our main result and \cite{Ly2}. However, we show in the next section, our modular homomorphism spaces may have dimension greater than 1.
\end{remark}
\section{Homomorphism spaces of dimension greater than 1}

As mentioned in the Introduction, the first examples of Weyl modules $\Delta(\la),\Delta(\mu)$ such that $\dim\Hom_S(\Delta(\la),\Delta(\mu))>1$ were obtained  by Dodge \cite{Do}. More examples were found by Lyle \cite{Ly1}, in fact in the $q$-Schur algebra setting.  The purpose of this section is to observe that the homomorphism spaces of Theorem 3.1 may have dimension $>1$, see Corollary 6.2 and Example 6.4 below.



%


We recall the following special case of the classical nonvanishing result of Carter and Payne \cite{CP}. Here boxes are raised between consecutive rows. See \cite{Ma}, 1.2 Lemma, for a proof of this particular case in our context.
\begin{proposition}[\cite{CP}]
Let $n \ge r$. Let $\la, \mu \in \we^+(n,r)$ such that for some some $d>0$ we have $\mu=(\la_1+d,\la_2-d,\la_3,...,\la_m)$, where $\la=(\la_1,...,\la_m)$. Suppose $p$ divides $R(\la_1-\la_2+d+1,d)$. Then the map \begin{align*}
\alpha : D(\la_1,\la_2,...,\la_m) &\xrightarrow{1 \otimes \Delta \otimes 1 } D(\la_1,d, \la_2-d,...,\la_m)\\& \xrightarrow{\eta \otimes 1} D(\la_1+d,\la_2-d,...,\la_m),
\end{align*}where $\Delta : D(\la_2) \to D(d,\la_2-d)$ is the indicated diagonalization and  $\eta : D(\la_1, d) \to D(\la_1+d)$  and the indicated multiplication, induces a nonzero homomorphism $\Delta(\la) \to \Delta(\mu)$.
\end{proposition}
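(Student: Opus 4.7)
The plan is to follow the standard template for Carter--Payne-type non-vanishing results via Theorem 2.1, mirroring the structure of the proof of Theorem 3.1. Two items must be verified: (A) the composite $d'_\mu \circ \alpha : D(\la) \to \Delta(\mu)$ annihilates $Im(\square_\la)$, so that it factors through $\Delta(\la)$; and (B) the induced map is nonzero. For (B), I would evaluate on $z = 1^{(\la_1)} \otimes 2^{(\la_2)} \otimes \cdots \otimes m^{(\la_m)} \in D(\la)$: the $(d, \la_2-d)$-component of $\D(2^{(\la_2)})$ is $2^{(d)} \otimes 2^{(\la_2-d)}$ and $\eta(1^{(\la_1)} \otimes 2^{(d)}) = 1^{(\la_1)} 2^{(d)}$, so $\alpha(z) = 1^{(\la_1)} 2^{(d)} \otimes 2^{(\la_2-d)} \otimes 3^{(\la_3)} \otimes \cdots \otimes m^{(\la_m)}$. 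Its image under $d'_\mu$ is the standard basis element of $\Delta(\mu)$ associated to the tableau with rows $1^{(\la_1)} 2^{(d)},\ 2^{(\la_2-d)},\ 3^{(\la_3)}, \ldots, m^{(\la_m)}$, which is nonzero by Theorem 2.2.

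For (A), I would verify vanishing on each summand $M(t)$ of the source of $\square_\la$, indexed by a pair $(i, t)$ with $1 \le i \le m-1$ and $1 \le t \le \la_{i+1}$. When $i \ge 3$, $\alpha$ modifies only slots $1$ and $2$ while the $(i, t)$-relation acts on slots $i, i+1$; since rows $\ge 3$ of $\mu$ coincide with those of $\la$, $\alpha$ carries the $\la$-relation to a $\mu$-relation lying in $Im(\square_\mu) = \ker d'_\mu$. The case $i = 2$ is handled analogously: by coassociativity of $\D$ and associativity of $\eta$, one commutes $\alpha$'s component $\D : D(\la_2) \to D(d, \la_2-d)$ past the relation's component $\D : D(\la_2+t) \to D(\la_2, t)$, absorbing the shift $\la_2 \mapsto \mu_2 = \la_2-d$.

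The essential case is $i = 1$, where the hypothesis enters. For the generator $y_0 = 1^{(\la_1+t)} \otimes 2^{(\la_2-t)} \otimes 3^{(\la_3)} \otimes \cdots \otimes m^{(\la_m)}$ of $M(t)$ (general-weight generators being reducible to this by $S$-linearity), a direct unwinding, using that $\D$ is an algebra map on $DV$, gives
\[
\alpha(\square_\la(y_0)) = \sum_{\substack{a+b=d\\ 0 \le a \le t,\, 0 \le b \le \la_2-t}} \binom{\la_1+a}{a}\, 1^{(\la_1+a)} 2^{(b)} \otimes 1^{(t-a)} 2^{(\la_2-t-b)} \otimes 3^{(\la_3)} \otimes \cdots.
\]
For $t > d$, each summand satisfies $a_1+b_1 = \la_1+t > \mu_1$ and is killed by Lemma 4.2(1). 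For $1 \le t \le d$, apply $d'_\mu$ and straighten each summand via Lemma 4.2(2) (localized to the first two rows of $\mu$): all summands collapse to a scalar multiple of the single standard basis element $[T^*]$ of $\Delta(\mu)$ with rows $1^{(\la_1+t)} 2^{(d-t)},\ 2^{(\la_2-d)},\ 3^{(\la_3)}, \ldots, m^{(\la_m)}$. The net coefficient of $[T^*]$ comes out to
\[
\sum_{a=0}^{t} \binom{\la_1+a}{a}\, (-1)^{t-a}\, \binom{\la_2-d}{t-a} = \binom{\la_1-\la_2+d+t}{t}
\]
by the second identity of Lemma 4.1(2). The principal obstacle is this straightening computation; its payoff is that, by Lemma 3.3, the hypothesis $p \mid R(\la_1-\la_2+d+1, d)$ is equivalent to $p \mid \binom{\la_1-\la_2+d+j}{j}$ for all $j = 1, \ldots, d$, so the coefficient vanishes in characteristic $p$, which is the precise combinatorial content of the Carter--Payne divisibility condition.
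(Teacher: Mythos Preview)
The paper does not supply its own proof of Proposition 6.1; it cites \cite{CP} and refers to \cite{Ma}, Lemma~1.2, for a proof in the present setting. Your argument therefore stands on its own, and it is essentially correct: verifying vanishing on each family of generators of $Im(\square_\la)$ via Theorem~2.1, using commutation of $\alpha$ with the relation maps when $i\ge 2$ and an explicit straightening computation for $i=1$, is precisely the expected route and mirrors the structure of the proof of Theorem~3.1. Your evaluation of the coefficient $\sum_{a=0}^{t}(-1)^{t-a}\tbinom{\la_1+a}{a}\tbinom{\la_2-d}{t-a}=\tbinom{\la_1-\la_2+d+t}{t}$ and its vanishing for $1\le t\le d$ under the hypothesis is correct.

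A few minor points deserve tightening. First, Lemma~4.2 is stated only for two-row $\mu$, whereas here $\mu$ may have $m$ parts; your appeal to it ``localized to the first two rows'' is legitimate because the proof of Lemma~4.2 uses only the row~$(1,2)$ relation in $\ker d'_\mu$ and the weight bound $\nu_1\le\mu_1$, both of which hold for general $\mu$, but this extension should be made explicit. Second, the identity you invoke at the end is the \emph{first} identity of Lemma~4.1(2), not the second, and the James lemma is numbered 3.4 in the paper, not 3.3. Third, the reduction to the single generator $y_0$ ``by $S$-linearity'' relies on $D(\la_1+t,\la_2-t,\ldots,\la_m)$ being a cyclic $S$-module generated by its highest weight vector; this is standard but merits a word of justification.
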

The main result of this section is the following.
\begin{cor}
Let $n \ge r$.	Let $\lambda,\mu \in \we^{+}(n,r)$ such that $\lambda=(\lambda_1,...,\lambda_m), \lambda_m \neq 0,  m \ge 3$ and  $\mu=(\mu_1, \mu_2)$. Define $d=\mu_1-\la_1$ and assume $0<d \le \la_2 - \la_3$ and $\mu_2 \le \la_1$. If $ p $ divides  all of the following integers \begin{enumerate}
\item $R(\lambda_1 -\mu_2+1,d),$ 
\item $R(\lambda_i+1,\lambda_{i+1}), \; i=2,...,m-1,$
\item $R(\lambda_1 -\la_2+d+1,d), $
\item $R(\lambda_2-d+1,\lambda_3),$
\end{enumerate}
then the dimension of the $K$-vector space $\Hom_S(\Delta(\la), \Delta(\mu))$ is at least 2.
\end{cor}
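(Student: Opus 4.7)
The plan is to exhibit two linearly independent elements $\psi_1, \psi_2 \in \Hom_S(\Delta(\la), \Delta(\mu))$. Write $z_\la = d'_\la(1^{(\la_1)} \otimes \cdots \otimes m^{(\la_m)}) \in \Delta(\la)$ for the canonical generator. The first map $\psi_1$ comes from applying Theorem 3.1 directly to $(\la, \mu)$: since $d \le \la_2 - \la_3 \le \la_2$, we have $\min\{\la_2, \mu_1 - \la_1\} = d$, so the hypotheses of that theorem reduce exactly to (1) and (2). The resulting $\psi_1$ sends $z_\la$ to $\sum_{T \in \mathrm{ST}_\la(\mu)} [T]$.

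For the second map, I would go through the intermediate partition $\nu = (\la_1 + d, \la_2 - d, \la_3, \ldots, \la_m)$; the inequality $d \le \la_2 - \la_3$ together with $\la_m \ne 0$ guarantees $\nu \in \we^+(n,r)$ has $m$ parts. Proposition 6.1, invoked under hypothesis (3), gives a nonzero Carter--Payne homomorphism $\psi_1' : \Delta(\la) \to \Delta(\nu)$ with $\psi_1'(z_\la) = d'_\nu(1^{(\la_1)} 2^{(d)} \otimes 2^{(\la_2-d)} \otimes 3^{(\la_3)} \otimes \cdots \otimes m^{(\la_m)})$. Theorem 3.1 applied to $(\nu, \mu)$ then produces a nonzero $\psi_2' : \Delta(\nu) \to \Delta(\mu)$: since $\nu_1 = \la_1 + d = \mu_1$, the parameter $l$ in that theorem equals $0$, so the first divisibility becomes $R(x, 0) = 0$ and holds trivially, while the remaining conditions $p \mid R(\nu_i + 1, \nu_{i+1})$ for $i = 2, \ldots, m-1$ are precisely hypothesis (4) for $i = 2$ and the relevant cases of (2) for $i \ge 3$. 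Set $\psi_2 := \psi_2' \circ \psi_1'$.

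The key computation will be to identify $\psi_2(z_\la)$ as a single standard basis element. Because $\nu_1 = \mu_1$, the set $\mathrm{ST}_\nu(\mu)$ contains only the tableau $T^*$ with row 1 entirely $1$'s and row 2 equal to $2^{(\la_2-d)} 3^{(\la_3)} \cdots m^{(\la_m)}$, so $\psi_2'$ is realized on generators by the single map $\phi_{T^*}$ from Section 2.4. Since the row-1 weights of $T^*$ satisfy $a_i = 0$ for $i \ge 2$, every comultiplication splitting in the $\phi_{T^*}$ formula collapses to a single trivial term, and direct substitution yields $\psi_2(z_\la) = [S]$, where $S \in \mathrm{ST}_\la(\mu)$ is the standard tableau with row 1 $= 1^{(\la_1)} 2^{(d)}$ and row 2 $= 2^{(\la_2-d)} 3^{(\la_3)} \cdots m^{(\la_m)}$ (standardness following from $\mu_2 \le \la_1$ via Lemma 2.3). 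In particular $\psi_2 \ne 0$.

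To conclude linear independence, I would exhibit a second tableau $T' \in \mathrm{ST}_\la(\mu)$ distinct from $S$. The hypotheses $d \ge 1$, $m \ge 3$, and $\la_m \ne 0$ (hence $\la_3 \ge 1$) make available $T'$ with row 1 $= 1^{(\la_1)} 2^{(d-1)} 3$ and row 2 $= 2^{(\la_2 - d + 1)} 3^{(\la_3 - 1)} 4^{(\la_4)} \cdots m^{(\la_m)}$. By Theorem 2.2, $[S]$ and $[T']$ are distinct elements of a basis of $\Delta(\mu)$; the coefficient of $[T']$ in $\psi_1(z_\la)$ is $1$, whereas in $\psi_2(z_\la) = [S]$ it is $0$, proving $\psi_1$ and $\psi_2$ are linearly independent. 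The main obstacle I anticipate is the computation of $\psi_2(z_\la)$, and this is rendered painless by the coincidence $\nu_1 = \mu_1$: it simultaneously collapses $|\mathrm{ST}_\nu(\mu)|$ to one and forces all comultiplication components in the $\phi_{T^*}$ formula to be trivial, so no appeal to the straightening Lemma 4.2 is required.
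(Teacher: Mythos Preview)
Your proposal is correct and follows essentially the same approach as the paper's proof: both construct $\psi_1$ from Theorem 3.1 applied to $(\lambda,\mu)$, and $\psi_2$ as a composite through the Carter--Payne map into $\Delta(\nu)$ with $\nu=(\lambda_1+d,\lambda_2-d,\lambda_3,\dots,\lambda_m)$, then a second map $\Delta(\nu)\to\Delta(\mu)$; the paper describes this second factor directly as the map induced by $1\otimes\eta'$ (multiplication of rows $2,\dots,m$), which is exactly your $\phi_{T^*}$ since $\nu_1=\mu_1$ forces $|\mathrm{ST}_\nu(\mu)|=1$. The only cosmetic difference is that the paper argues linear independence by noting $|\mathrm{ST}_\lambda(\mu)|\ge 2$ abstractly, whereas you exhibit an explicit second tableau $T'$.
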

\begin{proof}
By the first two divisibility conditions, the map \[\psi_1 =\sum_{T \in \mathrm{ST}_{\la}(\mu)}\phi_T : D(\la) \to D(\mu)\] induces a nonzero homomorphism $\bar{\psi_1}: \Delta(\la) \to \Delta(\mu)$ according to Theorem 3.1.

Next consider the following maps \begin{align*}
\alpha : D(\la_1,\la_2,...,\la_m) \to D(\la_1+d,\la_2-d,...,\la_m)
\end{align*} 
as in Proposition 6.1 and 
\begin{align*}
\beta : D(\la_1+d, \la_2-d,...,\la_m) &\xrightarrow{1 \otimes \eta' } D(\la_1+d, \la_2-d+\la_3+\cdots+\la_m)
\end{align*}
where  $\eta' : D(\la_2-d,...,\la_m) \to D(\la_2-d+\la_3+\cdots +\la_m)$ are the indicated multiplications.

Under assumption (3), we have that $\alpha$ induces a nonzero map \[\bar{\alpha}: \Delta(\la) \to D(\la_1+d,\la_2-d,...,\la_m)\]
according to Proposition 6.1

Under assumptions (2) and (4), we have that $\beta$ induces a nonzero map \[\bar{\beta}: \Delta(\la_1+d, \la_2-d,...,\la_m) \to \Delta(\la_1+d, \la_2-d+\la_3+\cdots +\la_m)\]
according to Theorem 2.1

Consider the composition $\bar{\psi_2} =\bar{\beta}\bar{\alpha} : \Delta(\la) \to \Delta(\mu)$ depicted below, where Weyl modules are indicated by the diagrams of the corresponding partitions.

\begin{center}
\begin{tikzpicture}[scale=0.6]
\draw (0,0) rectangle (3.5,0.5);
\draw (4,0.2) node {$\la_1$};
\draw (4,-0.3) node {$\la_2$};
\draw(0,-0.5) rectangle (2,0);
\draw[fill=gray!30] (1.3,-0.5) rectangle (2,0);
\draw (0,-1) rectangle (1,-0.5);
\draw (0.5,-1.5) node {$\cdots$};
\draw (0,-2) rectangle (0.7,-2.5);
\draw (5,-0.5) node {$\xrightarrow{\bar{\alpha}}$};
\draw (5.7,0) rectangle (9.5,0.5);
\draw[fill=gray!30] (8.8,0) rectangle (9.5,0.5);
\draw (5.7,-0.5) rectangle (7,0);
\draw (5.7,-1) rectangle (6.7,-0.5);
\draw (6.2,-1.5) node {$\cdots$};
\draw (5.7,-2) rectangle (6.4,-2.5);
\draw (12.2,-0.5) node {$\xrightarrow{\bar{\beta}}$};
\draw (13.2,0) rectangle (17.2,0.5);
\draw (13.2,-0.5) rectangle (16.2,0);
\draw (10.8,0.2) node {$\la_1+d$};
\draw (10.8,-0.3) node {$\la_2-d$};
\draw (18.2,0.2) node {$\la_1+d$};
\draw (17.8,-0.3) node {$\mu_2$};
\end{tikzpicture}
\end{center}

It remains to be shown that the homomorphisms $\bar\psi_1, \bar\psi_2$ are linearly independent. Let $z=d'_{\la}(1^{(\la_1)}\otimes\cdots\otimes m^{(\la_m)}) \in \Delta(\la)$. From the definitions of the maps we have 
\begin{equation*}
\bar\psi_1(z)=\sum_{T \in \mathrm{ST}_{\la}(\mu)}[T]\end{equation*}
and
\begin{equation*}\bar\psi_2(z)=\begin{bmatrix*}[l]
1^{(\la_1)}2^{(d)} \\
2^{(\la_2-d)}  \cdots  m^{(\la_m)}\end{bmatrix*}.\end{equation*} It is clear that $\begin{matrix*}[l]
1^{(\la_1)}2^{(d)} \\
2^{(\la_2-d)}  \cdots  m^{(\la_m)}\end{matrix*} \in \mathrm{ST}_{\la}(\mu).$ Since $\la_3>0$, the set $\mathrm{ST}_{\la}(\mu)$ contains at least two elements. Hence from the above equations and Theorem 2.2 it follows that the maps $\bar\psi_1, \bar\psi_2$ are linearly independent.
\end{proof}

\begin{remark}\normalfont The assumptions of Corollary 6.2 imply that for the corresponding Specht modules we have
$ \dim \Hom_{\mathfrak{S}_r}(\Sp (\mu),\Sp (\la)) \ge 2.$
See Remark 3.2.
\end{remark}
\begin{example}\normalfont Let $p$ be a prime and $a$ an integer such that $a \ge (p^2+1)(p-1)$ and 
\[a \equiv p-2 \mod p^2.\]
Consider the following partitions \begin{align*}&\la=(a, 2p-1, (p-1)^{p^2}), \\& \mu=(a+p, (p^2+1)(p-1)),\end{align*}
where $p-1$ appears $p^2$ times as a row in $\la$. Using Lemma 3.3 it easily follows that the assumptions (1) - (4) of  Corollary 6.2 are satisfied. For example, we have \[\la_1-\mu_{2}+1 \equiv p-2-(p^2+1)(p-1)+1 \equiv 0 \mod p^2\] and hence by Lemma 3.3, $d=p$ divides  $R(\la_1 - \mu_{2}+1,d)$ which is assumption (1). Thus $\dim\Hom_S(\Delta(\la), \Delta(\mu)) \ge 2.$ \footnote{We note that for fixed $p$, it follows from the main result of \cite{MS2} that the dimension of $\Hom_S(\Delta(\la), \Delta(\mu))$ does not depend on $a$. For $p=3$, this means that $\dim\Hom_S(\Delta(\la), \Delta(\mu))=2$ for all $a$, see \cite{MS2}, Example 2.4.}

For $p=2$ the least $a$ that satisfies the above requirements is $a=8$ and thus we have the partitions $\la=(8,3,1,1,1,1), \mu=(10,5)$. This pair appears in Example 4, subsection 2.3, of Lyle's paper \cite{Ly2} which prompted us to consider Corollary 6.2 and in particular the composition $\bar{\psi_2} =\bar{\beta}\bar{\alpha} : \Delta(\la) \to \Delta(\mu)$.

\end{example}


%

\end{document}